\documentclass{gtart}
\gtart

\usepackage{amsthm}
\usepackage{amsgen}
\usepackage{amsmath,amssymb}
\usepackage{curvesls}
\usepackage{epic}
\usepackage[all]{xy}
\usepackage[dvips]{graphicx}
\usepackage{psfrag}
\usepackage[T1]{fontenc}
\usepackage[sc]{mathpazo}
\usepackage{color, xcolor, colortbl}
\usepackage{float}
\usepackage{arydshln, multirow} 
\usepackage{multicol}
\usepackage{fancybox}
\linespread{1.05}

\setlength{\oddsidemargin}{0.1 in}
\setlength{\textwidth}{6.6 in}
\setlength{\topmargin}{-0.46 in}
\setlength{\textheight}{8.8 in}
\setlength{\unitlength}{1 mm}

\newtheorem{thm}{Theorem}[section]

\newtheorem{cor}[thm]{Corollary}
\newtheorem{prop}[thm]{Proposition}

\theoremstyle{definition}

\newcommand\scalemath[2]{\scalebox{#1}{\mbox{\ensuremath{\displaystyle #2}}}}

\newcommand{\Real}{{\mathbb R }}

\newcommand{\Emb}{{\mathrm{Emb}}}
\newcommand{\Diff}{{\mathrm{Diff}}}
\newcommand{\Imm}{{\mathrm{Imm}}}

\newcommand{\HD}{{\mathrm{HD}}}
\newcommand{\holim}{{\mathrm{holim}}}
\newcommand{\EK}[1]{{\mathrm{EC}({#1})}}

\definecolor{orange}{RGB}{255, 123, 21}

\begin{document}

\title{Stabilisation, scanning and handle cancellation}
\authors{Ryan Budney}

\addresses{
Mathematics and Statistics, University of Victoria PO BOX 3060 STN CSC, Victoria BC Canada V8W 3R4}
\emails{rybu@uvic.ca}

\begin{abstract} 
In this note we describe a family of arguments that link the homotopy-type of a) the diffeomorphism group of
the disc $D^n$, b) the space of co-dimension one embedded spheres in $S^n$ and
c) the homotopy-type of the space of co-dimension two trivial knots in $S^n$.  We also describe some natural
extensions to these arguments.  We begin with Cerf's `upgraded' proof of 
Smale's theorem, that the diffeomorphism group of $S^2$ has the homotopy-type of the isometry group.  
This entails a canceling-handle construction, related to the `scanning' maps of spaces of embeddings \cite{BG}
$\Emb(D^{n-1}, S^1\times D^{n-1}) \to \Omega^j \Emb(D^{n-1-j}, S^1 \times D^{n-1})$. 
We further give a Bott-style variation on Cerf's construction, and a related Embedding Calculus framework 
for these constructions.  We use these arguments to prove that the monoid of Sch\"onflies spheres
$\pi_0 \Emb(S^{n-1}, S^n)$ is a group with respect to the connect-sum operation, for all $n \geq 2$. This
last result is perhaps only interesting when $n=4$, as when $n \neq 4$ it follows from the resolution of the various
generalized Sch\"onflies problems. 
\end{abstract}

\primaryclass{57M99}
\secondaryclass{57R52, 57R50, 57N50}
\keywords{Embeddings, diffeomorphisms}
\maketitle


\section{Introduction}\label{intro}

In Cerf's landmark paper \cite{Ce2}, somewhat overlooked is a novel proof of Smale's theorem, that the group of diffeomorphisms
of the $2$-sphere, $\Diff(S^2)$ has the homotopy-type of its linear subgroup $O_3$.  The core of Cerf's argument is the proof that
the Smale-Hirsch map (pointwise derivative) $\Diff(D^2) \to \Omega^2 GL_2(\Real)$ has a left homotopy-inverse.  Cerf states his theorem 
in the language of homotopy groups, i.e. the homotopy groups 
of $\Diff(D^2)$ inject into the homotopy groups of $\Omega^2 GL_2(\Real)$.  Since the latter homotopy groups are trivial, and 
diffeomorphism groups of compact manifolds have the homotopy-type of countable CW-complexes \cite{Pa2}, this allows Cerf to conclude $\Diff(D^2)$
is contractible via the Whitehead Theorem. In this paper we use the notation that if $N$ is a manifold with boundary, $\Diff(N)$ denotes the group of 
diffeomorphisms of $N$ that restrict to the identity on $\partial N$.   We will use the same conventions for embedding spaces, i.e.
$\Emb(N, M)$ denotes the space of smooth embeddings of $N$ in $M$, and if $N$ and $M$ have boundary these maps will all restrict to 
one given map $\partial N \to \partial M$. 

Smale's proof that $\Diff(D^2)$ is contractible uses the Poincar\'e-Bendixson Theorem to guarantee the flow of the 
vector fields he uses terminate in finite time.  As the Poincar\'e-Bendixson
theorem is only available in dimension two, it limits the applicability of Smale's argument.   We should note, there
have been attempts to broaden the applicability of a Smale-type argument, by studying spaces of closed $1$-forms. 
See for example the two papers of Laudenbach and Blank \cite{La1} \cite{La2} for a sampling.  Since Cerf's argument does not depend on
Poincar\'e-Bendixson, it allows for greater applicability.  The headline consequences of Cerf's arguments
are that the diffeomorphism group $\Diff(D^n)$ has the same homotopy-type  as $\Omega \Emb(D^{n-1}, D^n)$ and also that 
the embedding space $\Emb(D^{n-1}, D^n)$ is a homotopy-retract of $\Omega \Emb(D^{n-2}, D^n)$.  Putting these two results
together, the homotopy groups of $\Diff(D^n)$ inject into the homotopy groups of $\Omega^2 \Emb(D^{n-2}, D^n)$ for all $n$.  
While Cerf states these as his theorems, his techniques prove much more.  It is the purpose of this paper to 
outline the consequences of his techniques. 

\subsection{Cerf's techniques}

To give Cerf's results some context, we first mention how the spaces he studies are related to some more commonly-discussed
objects.  A linearization argument \cite{fam} shows that the diffeomorphism group $\Diff(S^n)$ has the homotopy-type
of $O_{n+1} \times \Diff(D^n)$, indeed the homotopy-equivalence comes from considering $\Diff(D^n)$ as the subgroup
of $\Diff(S^n)$ that is the identity on a fixed hemi-sphere, and the homotopy-equivalence
$O_{n+1} \times \Diff(D^n) \to \Diff(S^n)$ is given by the group multiplication in $\Diff(S^n)$. 
There is an analogous homotopy-equivalence $\Emb(S^j, S^n) \simeq SO_{n+1} \times_{SO_{n-j}} \Emb(D^j, D^n)$
when $n>j$.  If we let $\Emb(D^{n-1}, S^1 \times D^{n-1})$ denote the space of smooth embeddings of 
$D^{n-1}$ in $S^1 \times D^{n-1}$ which agree with the standard inclusion $D^{n-1} \to \{1\} \times D^{n-1}$
on the boundary sphere, then there is a `handle-filling' homotopy-equivalence
$\Diff(S^1 \times D^{n-1}) \simeq \Diff(D^n) \times \Emb(D^{n-1}, S^1 \times D^{n-1})$. 

In Cerf's paper \cite{Ce2} the main results we highlight concern three maps:
\begin{enumerate}
\item $\Diff(D^n) \to \Omega \Emb(D^{n-1}, D^n)$.
\item $\Emb(D^{n-1}, D^n) \to \Emb(D^{n-1}, S^1 \times D^{n-1})$, this is the map given by attaching a $1$-handle to $D^n$
so that the attaching sphere links the standard $S^{n-2}$ in $\partial D^n \equiv S^{n-1}$, i.e. we think of $S^1 \times D^{n-1}$
as $D^n$ with a $1$-handle attached, thus the map comes from simply changing the co-domain of the embedding.
\item $\Emb(D^{n-1}, S^1 \times D^{n-1}) \to \Omega \Emb^\nu(D^{n-2}, D^n)$. The symbol $\nu$ indicates the embeddings
are required to have an everywhere non-zero normal vector field, and the vector field are some standard (constant)
on the boundary.
\end{enumerate}

Cerf's result is that the maps (1) and (3) are homotopy-equivalences, while (2) is a homotopy-retract, i.e. has 
a left homotopy-inverse.  The definition of the maps (1) and (3) are analogous, and will be described 
precisely in Section \ref{mainsec}. The rough idea of these maps is to fiber the domain of the embedding by a 
$1$-parameter family of co-dimension one discs, and restrict the map to these fibers, and appropriately
changing the co-domain of the family of embeddings, via a filling.  In the case of (3) 
the fibering construction would give a $1$-parameter family of embeddings of $D^{n-2}$ in $S^1 \times D^{n-1}$ but we carefully 
fill with a canceling $2$-handle, to construct an element of $\Omega \Emb(D^{n-2}, D^n)$.  

\subsection{Extrapolating from Cerf}

In Section \ref{mainsec} we observe that Cerf's argument, unchanged, gives a homotopy-equivalence
$$\Emb(D^j, S^{n-j} \times D^j) \to \Omega \Emb^\nu(D^{j-1}, D^n).$$
Cerf's results (1) and (3) above correspond to the $j=n$ and $j=n-1$ case of this homotopy-equivalence. 
If we think of $S^{n-j} \times D^j$ as $D^n$ union an $(n-j)$-handle, then the domain of our map, 
$\Emb(D^j, S^{n-j} \times D^j)$ is the
space of all cocores, i.e. smooth embeddings of $D^j$ in $S^{n-j} \times D^j$ that agree with a standard
linear inclusion $D^j \to \{*\} \times D^j$ on the boundary. 
The codomain is the loop space of the space of smooth embeddings $D^{j-1} \to D^n$ that carry a nowhere-zero
normal vector field, moreover the embedding and the vector field are standard linear embeddings on the boundary. 
The base-point of the embedding space $\Emb^\nu(D^{j-1}, D^n)$ is the linear (i.e. boundary parallel) embedding.

 It is here where authors noticed a connection to
recent `lightbulb theorems' in low-dimensional topology \cite{Ga} \cite{BG} \cite{KT}. The above equivalence can be recast
slightly -- using the same argument but applying it to a strictly larger class of spaces.  
Let $N$ be a compact $n$-manifold with non-empty boundary, and let $\natural$ denote the boundary 
connect-sum operation. Think of the boundary connect sum $N \natural (S^{n-j} \times D^j)$ as $N$ with a
trivial $(n-j)$-handle attached, then the space of cocores of this attached handle, which we could denote
$\Emb(D^j, N \natural (S^{n-j} \times D^j))$ has the same homotopy-type as the loop space 
$\Omega \Emb^\nu(D^{j-1}, N)$, which is the loop space of the space of embedded $D^{j-1}$ discs
with normal vector field in the manifold $N$ -- the space of embeddings we give the base-point of
a boundary-parallel embedding.   This version is emphasized in \cite{KT}.

\subsection{Related expositions}

Another way to look at this paper is that it is both an addendum to \cite{fam}, and a paper that highlights
methods from \cite{BG} and \cite{Ce2} that deserve to be singled-out.  Both \cite{BG} and \cite{Ce2} are long papers with many
results, so it is easy to overlook this technique. We hope a shorter-format paper devoted to one tool does the ideas the justice 
they deserve.  In \cite{fam}, an attempt was made to describe the most basic relations between the homotopy-type of diffeomorphism 
groups and embedding spaces for the smallest manifolds, such as spheres and discs.  These Cerf techniques were known to the 
author, but perhaps indicative of the techniques, the only consequences the author knew at the time were 
already known, by other methods.  So they were removed from the paper before publication.   

For example, the connection between the 
homotopy-type of the component of the unknot $\Emb_u(S^1, S^3)$, with the homotopy-type of 
$\Diff(S^3)$, which is immediate from Cerf's perspective, is historically derived using Hatcher's work on spaces of 
incompressible surfaces \cite{Hat} (see the final pages).  In Section \ref{miscsec} we describe the relation between
Cerf's half-disc fibrations and the more commonly used restriction fibration $\Diff(S^n) \to \Emb(S^j, S^n)$. 

\subsection{Sch\"onflies}

An interesting observation in \cite{BG} is that the `stacking' operation, while appearing to 
be just a monoid structure on the space $\Emb(D^{n-1}, S^1 \times D^{n-1})$, using Cerf's argument one can show the space is 
group-like, i.e. the induced monoid structure on $\pi_0 \Emb(D^{n-1}, S^1 \times D^{n-1})$ is that of a group, for all $n \geq 2$.  
One consequence of this is an argument the monoid of Sch\"onflies spheres $\pi_0 \Emb(S^{n-1}, S^n)$ is a group using the 
relative connect-sum operation.  There is a classical argument due to Kervaire-Milnor that this monoid has inverses. Our argument
is characteristically different, in that we construct an onto homomorphism from a group, i.e. in a weak sense we give a presentation of
the monoid of Sch\"onflies spheres.  This appears in Section \ref{schoen}. 

\subsection{High co-dimension scanning}

A scanning technique was proposed for studying the homotopy-type of $\Diff(S^1 \times D^n)$, 
by considering the chain of maps
$$\scalemath{0.9}{\Diff(S^1 \times D^{n-1}) \to \Emb(D^{n-1}, S^1 \times D^{n-1}) \to 
\Omega \Emb(D^{n-2}, S^1 \times D^{n-1}) \to \cdots \to \Omega^{n-2} \Emb(D^1, S^1 \times D^{n-1})}$$
in the sequence \cite{BG}, \cite{BG2}. Interestingly, an infinitely-generated subgroup of 
$\pi_{n-4} \Diff(S^1 \times D^{n-1})$ survives to the end
of the sequence
$$\pi_{n-4} \Omega^{n-2} \Emb(D^1, S^1 \times D^{n-1}) \equiv \pi_{2n-6} \Emb(D^1, S^1 \times D^{n-1}),$$
 for all 
$n \geq 4$.  At present little is known about Cerf's scanning maps $\Diff(D^n) \to \Omega^j \Emb(D^{n-j}, D^n)$ when 
$j \geq 3$, but these results suggest such maps have the potential to be homotopically non-trivial, and could be
used to deduce results even about $\pi_0 \Diff(D^n)$ for $n \geq 4$.  Although, we now know the
map $\Diff(D^n) \to \Omega^{n-1} \Emb(D^1, D^n)$ is null-homotopic \cite{BG2}.  The transitional map 
$$\Emb(D^{n-2}, D^n) \to \Omega \Emb(D^{n-3}, D^n)$$
is perhaps of greatest interest, as the target space can be studied with techniques such as the Embedding Calculus, while
we have little in the way of general theory to study the homotopy-type of $\Emb(D^{n-2}, D^n)$. It would be more precise to
to say we have general theory when $n < 4$ but when $n \geq 4$, separating the path-components of $\Emb(D^{n-2}, D^n)$ is
a difficult mathematical problem. 
Similarly, little is known about $\pi_1 \Emb(D^2, D^4)$ at present.   If one allows the embeddings to have trivialized 
normal bundles (normal framings) one has scanning maps of the form 
$\Diff(D^n) \to \Omega^j \Emb^{fr}(D^{n-j}, D^n) \to \Omega^n GL_n(\Real)$ where the
space on the right is the terminal $j=n$ case. The map $\Diff(D^n) \to \Omega^n GL_n(\Real)$ is known as the Smale-Hirsch
map, i.e. the pointwise derivative map.  Whether or not this Smale-Hirsch map is homotopically non-trivial has been an 
open problem for some time.  Interestingly, it has recently been shown to be homotopically non-trivial in 
the $n=11$ case \cite{Cr}.

One other impetus for studying such scanning maps is that these embedding spaces are highly structured objects. For example, 
$\Diff(D^n)$ is homotopy-equivalent to the space $\EK{n, *}$, called the `cubically-supported embedding space'. 
If $M$ is a compact manifold, 
$\EK{j,M}$ denotes the space of smooth embeddings $f : \Real^j \times M \to \Real^j \times M$ where the support $supp(f)$
is constrained to be a subset of $I^j \times M$, i.e. $supp(f) = \{ p \in \Real^j \times M : f(p) \neq p \} \subset I^j \times M$. 
The space $\EK{j, M}$ admits an action of the operad of $(j+1)$-cubes, thus it is not far away from being an $(j+1)$-fold loop-space.
The way to think about this operad action is there is an action of the $j$-cubes operad on $\EK{j, M}$, due to the 
affine structure on the 
$\Real^j$ factors of $\Real^j \times M$.  The space $\EK{j,M}$ is also a monoid under composition of functions, 
and these two operations can be
promoted naturally to a $(j+1)$-cubes action, described in \cite{fam}. 

The space $\EK{j, D^{n-j}}$ fibers over $\Emb(D^j, D^n)$ with fiber $\Omega^{j} SO_{n-j}$ -- indeed, the spaces
$\EK{j, D^{n-j}}$ are homotopy-equivalent to $\Emb^{fr}(D^j, D^n)$.  There are scanning maps 

$$\EK{n, *} \to \Omega \EK{n-1, D^1} \to \cdots \to \Omega^j \EK{n-j, D^j} \to \cdots \to \Omega^{n-1} \EK{1, D^{n-1}} \to \Omega^n GL_n(\Real)$$
which commute with the action of the $(n+1)$-cubes operad.  While the reference \cite{fam} allows one to see these cubes actions
explicitly, there are also ways of describing the iterated loop space structure using smoothing theory. 
Thus the ability of scanning maps to detect homotopy in diffeomorphism groups and embedding spaces is closely connected to the 
question of to what extent the Smale-Hirsch map for $\Diff(D^n)$ is non-trivial.  
To add some additional context, iterated loop spaces are highly structured objects, and 
finding maps between them is somewhat analogous to finding a homomorphism between other highly-structured objects like rings or modules: 
if the map is not zero, it is often highly non-trivial.

In this paper we outline what is known about such scanning maps, and where some potentially interesting future 
computations sit. 

The author would like to thank David Gabai, Robin Koytcheff, Victor Turchin, Hyam Rubinstein and Alexander Kupers for helpful comments.
In particular, this paper is largely exposition of a subset of results from a joint paper with David Gabai \cite{BG}. 


\section{Canceling handles}\label{mainsec}

The space $\Emb(D^j, N)$ denotes the space of embeddings of $D^j$ in $N$ where the boundary of $D^j$ is mapped to 
$\partial N$ in some fixed, prescribed manner.  In the case of $\Emb(D^j, D^n)$ the embedding is required to restrict 
to the standard inclusion $x \longmapsto (x,0)$ on the boundary. 

Cerf constructs an isomorphism \cite{Ce2} (Proposition 5, pg. 128)  for all $i \geq 0$ and $n \geq 1$ (see also
Theorem 4 of \cite{Ce1})

$$\pi_i \Diff(D^n) \simeq \pi_{i+1} \Emb(D^{n-1}, D^n)$$
 which we promote to a homotopy-equivalence $\Diff(D^n) \simeq \Omega \Emb(D^{n-1}, D^n)$.  Equivalently,
this homotopy-equivalence can be stated as a description of the classifying space of $\Diff(D^n)$
$$B\Diff(D^n) \simeq \Emb_u(D^{n-1},D^n).$$
The subscript $u$ indicates the component of the unknot in $\Emb(D^{n-1}, D^n)$, i.e. the component of the linear embedding.  The above
results were stated at least as far back as \cite{fam}, but it would not be surprising if this observation 
had been written
down earlier.  

The map $\Diff(D^n) \to \Omega \Emb(D^{n-1}, D^n)$ 
has a simple description thinking of $\Diff(D^n)$ as the diffeomorphisms of $\Real^n$ with support
contained in $D^n$.  One then considers $D^n$ as a subset of $I \times D^{n-1}$.  Restriction to the
fibers $\{t\} \times D^{n-1}$ gives the $1$-parameter family of embeddings of $D^{n-1}$ into $D^n$. 
After suitably translating and scaling the embedding family to have fixed boundary conditions, this is an 
element of $\Omega \Emb(D^{n-1}, D^n)$. 

The map back
$\Omega \Emb(D^{n-1}, D^n) \to \Diff(D^n)$ is defined by an elementary isotopy-extension construction. 
Following Cerf, let $\HD^j$ denote the $j$-dimensional {\it half-disc}, i.e. 
$$\HD^j = \{ (x_1, \cdots, x_j) \in \Real^j : \sum_{i=1}^j x_i^2 \leq 1, x_1 \leq 0 \}.$$
The boundary $\partial \HD^j$ consists of the two subspaces: The subspace (1) $\partial D^j \cap \HD^j$,
called the {\it round face}, and the subspace (2) satisfying $x_1=0$ called the {\it flat face}.

\begin{figure}[H]
{
\psfrag{HD}[tl][tl][0.7][0]{$f(\HD^n)$}
\psfrag{FF}[tl][tl][0.7][0]{$f(\{0\} \times D^{n-1})$}
\psfrag{Dn}[tl][tl][0.7][0]{$D^n$}
$$\includegraphics[width=10cm]{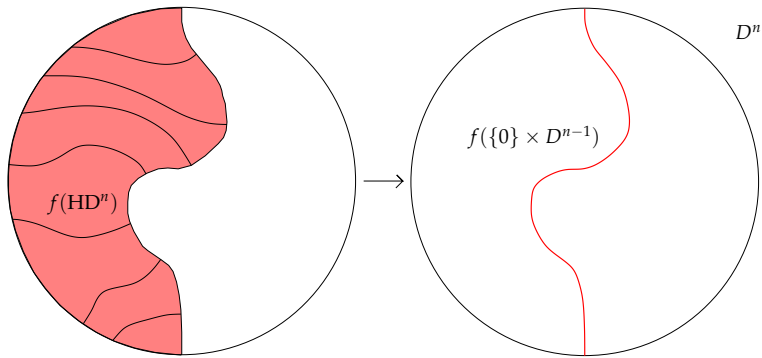}$$
}
\caption{\label{HDfib} The half-disc fibration.}
\end{figure}

Let $\Emb(\HD^n, D^n)$ be the space
of embeddings of $\HD^n$ into $D^n$ that restrict to the identity map on $\HD^n \cap \partial D^n$, i.e.
acting as the identity on the round face. The map given by restriction to the flat face
is a Serre fibration (see Figure \ref{HDfib}) \cite{Ce0} 
$$\Diff(\HD^n) \to \Emb(\HD^n, D^n) \to \Emb(D^{n-1}, D^n).$$
Moreover, via an argument directly analogous to the homotopy classification of collar neighborhoods or
tubular neighborhoods, one can show  $\Emb(\HD^n, D^n)$ is contractible \cite{Ce1}.  The rough idea is that every 
such embedding is isotopic to its restriction to a small neighborhood of the round face, where you can approximate
the embedding by the standard linear inclusion -- indeed, the straight-line homotopy between the embedding
and the standard linear inclusion is an isotopy, at least in a sufficiently-small neighborhood of the round face.

The proof that the above map is a Serre fibration is a version of the isotopy-extension theorem `with parameters', i.e. 
the proof of isotopy extension given in Hirsch's text \cite{Hir} suffices to also prove such maps are Serre fibrations. 
We should also mention that Palais also has shown \cite{Pa2} that a broad class of spaces of embeddings and 
diffeomorphism groups, including all the spaces discussed in this paper, have the homotopy-type of countable
CW-complexes.  The rough idea of the proof is that such embedding spaces are homeomorphic to open subsets of 
a Hilbert cube (consider for example representing smooth functions via something like a Fourier expansion), and open subsets of
Hilbert cubes admit CW-structures, in a manner analogous to open subsets of $\Real^n$.  

The total space $\Emb(\HD^n, D^n)$ is contractible, as sketched above and proven by Cerf \cite{Ce2}.  This tells us that 
the connecting map 
$$\Omega \Emb(D^{n-1}, D^n) \to \Diff(\HD^n)$$
is a homotopy-equivalence.  The inclusion $\Diff(\HD^n) \to \Diff(D^n)$ is a homotopy-equivalence via a rounding-the-corners
argument.  The definition of the connecting map $\Omega \Emb(D^{n-1}, D^n) \to \Diff(\HD^n)$ comes from
observing that an element of $\Omega \Emb(D^{n-1}, D^n)$ via currying can be thought of as a map
$[0,1] \times D^{n-1} \to D^n$ which is continuous globally, but smooth on the $\{t\} \times D^{n-1}$ fibers. 
A smoothing construction \cite{Hir} allows us to perturb this map to be globally smooth, not affecting the
the restriction of the map to the boundary of $[0,1] \times D^{n-1}$. It is with this smoothing that we apply
the isotopy-extension construction. Specifically, this smoothing argument tells us the subspace of 
$\Omega \Emb(D^{n-1}, D^n)$ such that the associated map $[0,1] \times D^{n-1} \to D^n$ is smooth, this subspace 
has the same homotopy-type as $\Omega \Emb(D^{n-1}, D^n)$.  There is an alternative approach that is formally 
analogous to the result that the loop space of a manifold has the same homotopy-type as its subspace of smooth loops. 
We view $\Emb(D^{n-1}, D^n)$ as a smooth Banach or Fr\'echet manifold (depending on the 
order of differentiability of the embeddings, $C^k$ with $k$ finite or infinite respectively).  
From this perspective a smooth map $[0,1] \to \Emb(D^{n-1}, D^n)$ via currying produces a smooth
map $[0,1] \times D^{n-1} \to D^n$. This has been made precise in several places in
the literature, see \cite{Kh} or \cite{Mi}.

We can justify why scanning $\Diff(D^n) \to \Omega \Emb(D^{n-1}, D^n)$ is the homotopy-inverse
to the connecting map $\Omega \Emb(D^{n-1}, D^n) \to \Diff(D^n)$ via Figure \ref{scanfig}.  We have
a central square whose horizontal axis is labeled $t$, and whose vertical axis is labeled $\alpha$. 
Given $t \in [0,1]$ let $f_t : \HD^n \to D^n$ denote the embedding whose restriction to $\{0\} \times D^{n-1}$
is a given element of $\Omega \Emb(D^{n-1}, D^n)$.  Let $\simeq$ be the equivalence relation on $[0,1] \times D^{n-1}$
generated by the equivalence classes $[0,1] \times \{p\}$ for all $p \in \partial D^{n-1}$, thus
$[0,1] \times D^{n-1}$ can be identified with $\HD^n$, i.e. we collapse all the edges $[0,1] \times \{p\}$ for all
$p \in \partial D^{n-1}$.  Under the identification of $[0,1] \times D^{n-1} / \sim$ with $\HD^n$, the corner strata
corresponds to the collapsed edges, $\{0\} \times D^{n-1}$ to the round face, and $\{1\} \times D^{n-1}$ to the flat
face.  In the Figure \ref{scanfig} $f_t(\{ \alpha \} \times D^{n-1})$ is denoted via a thick red curve.  The upper
line of our square therefore denotes $f_t(\{0\} \times D^{n-1})$, our element of $\Omega \Emb(D^{n-1}, D^n)$.
This is homotopic to the concatenation of the other three boundary segments of the square.  The rightmost segment of 
the square is the `sweep-out' portion of scanning, and the left-most segment is the sweep-out of the standard inclusion. 
The lower edge is constant. 

\begin{figure}[H]
{
\psfrag{alpha}[tl][tl][0.7][0]{$\alpha$}
\psfrag{time}[tl][tl][0.7][0]{$t$}
$$\includegraphics[width=12cm]{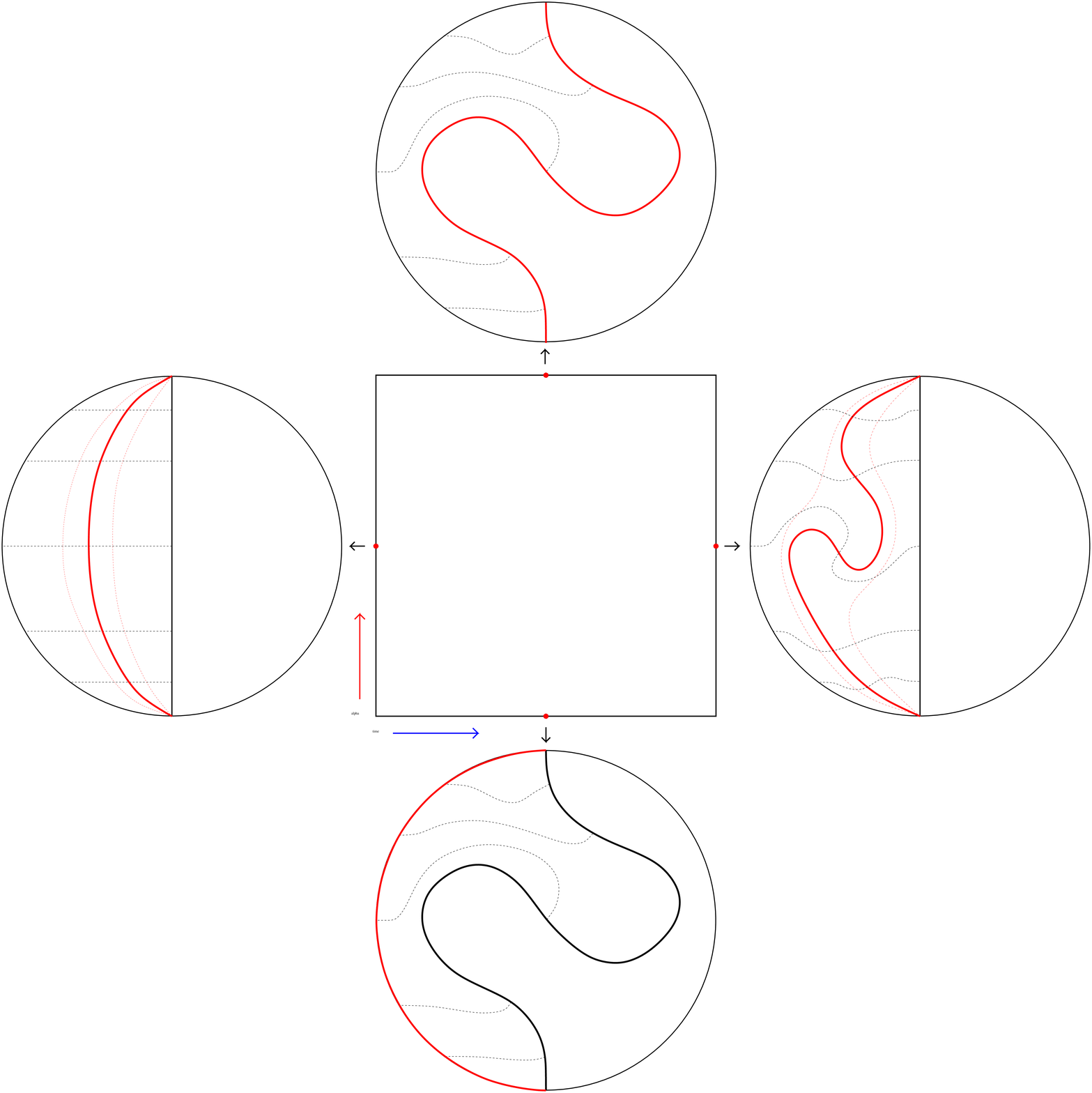}$$
}
\caption{\label{scanfig} Homotopy-inverse of isotopy extension.}
\end{figure}

To extrapolate,  let $\Emb^{\nu}(D^{j-1}, N)$ denote the space of smooth embeddings of $D^{j-1}$ in $N$ such that the boundary 
is sent to the boundary in a prescribed manner, and the embedding comes equipped with a normal vector field (standard on the boundary), then we have 
a restriction (Serre) fibration
$$\Emb(D^j, N \setminus \nu D^{j-1}) \to \Emb(\HD^j, N) \to \Emb^{\nu}(D^{j-1}, N).$$
The total space is the space of smooth embeddings of $\HD^j$ in $N$ such that the round face is sent
to $\partial N$ in a prescribed manner. The space $\nu D^{j-1}$ indicates an open tubular neighborhood in $N$ corresponding
to the base-point element of $\Emb^{\nu}(D^{j-1}, N)$.  We keep track of the normal vector field in the base space, 
as otherwise the fiber would be an embedding space where the discs are not neatly embedded.  
One can of course argue the above is not literally the fiber -- it should be the subspace
of $\Emb(\HD^j, N)$ that agrees with a fixed embedding on the flat boundary. That said, blowing up the flat boundary or
a tubular neighborhood argument together with drilling the open tubular neighborhood completes the identification of
the fibre.

This gives us an analogous homotopy-equivalence
$$\Omega \Emb^{\nu}(D^{j-1}, N) \simeq \Emb(D^j, N \setminus \nu D^{j-1}).$$

\begin{figure}[H]
{
\psfrag{D}[tl][tl][0.7][0]{$\textcolor{blue}{S^{n-j} \times \{*\} }$}
\psfrag{pxD}[tl][tl][0.7][0]{$\textcolor{red}{\{*\} \times \partial D^j}$}
\psfrag{SxD}[tl][tl][1][0]{$S^{n-j} \times D^j$}
\psfrag{fD}[tl][tl][1][0]{$\textcolor{red}{f(D^j)}$}
\psfrag{N}[tl][tl][1][0]{$N$}
$$\includegraphics[width=12cm]{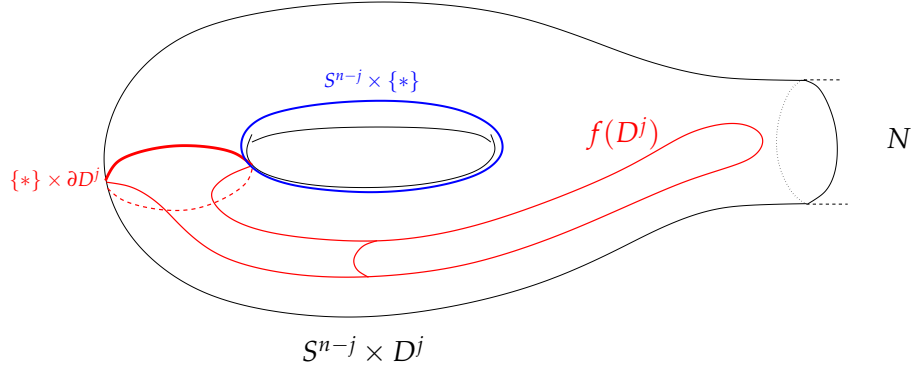}$$
}
\caption{\label{chand} Cocore embedding $f \in \Emb(D^j, (S^{n-j} \times D^j) \natural N)$ in red.
If one drills a tubular neighborhood of a linearly embedded $D^{j-1} \to D^n$ one has a manifold diffeomorphic
to $S^{n-j} \times D^j$, which gives the equivalence
$(S^{n-j} \times D^j) \natural N \simeq N \setminus \nu D^{j-1}$.}
\end{figure}

The space $N \setminus \nu D^{j-1}$ is $N$ with a $(j-1)$-handle drilled-out, and the embedding of $D^j$
is a canceling handle for the $(j-1)$-handle, thus the $(j-1)$-handle is parallel to the boundary.  
As another model for $N \setminus \nu D^{j-1}$, we turn the handle upside-down and think of this manifold
as $N \cup H^{n-j}$, i.e. $N$ union a $(n-j)$-handle.  Since the handle attachment is trivial, this manifold is diffeomorphic to 
$(S^{n-j} \times D^j) \natural N$.  From this perspective, the embeddings of
$\Emb(D^j, (S^{n-j} \times D^j) \natural N)$
can be thought of as a space of embeddings of cocores for the $(n-j)$-handle attachment of the boundary 
connect-sum $(S^{n-j} \times D^j) \natural N$, i.e. these cocores are allowed to reach into the $N$ summand.

 This last interpretation is perhaps the most convenient for stating the homotopy-equivalence 
$\Omega \Emb^{\nu}(D^{j-1}, N) \simeq \Emb(D^j, (S^{n-j} \times D^j) \natural N)$ as the boundary condition on the latter
embedding space sends $\partial D^j$ to $\{p\} \times \partial D^j \subset S^{n-j} \times D^j$.  By design, the embeddings in 
$\Emb^{\nu}(D^{j-1}, N)$ are isotopically trivial on the boundary $S^{j-2} \to \partial N$. 

\begin{thm}\label{main1}
There is a homotopy-equivalence
$$\Omega \Emb^{\nu}(D^{j-1}, N) \simeq \Emb(D^j, (S^{n-j} \times D^j) \natural N)$$
where $\Emb^{\nu}(D^{j-1}, N)$ is the space of smooth embeddings of $D^{j-1}$ in $N$ such that the pre-image of the boundary
of $N$ is the boundary of $D^{j-1}$. The embedding of $\partial D^{j-1}$ is required to be a fixed embedding, and isotopically
trivial, i.e. bounding an embedded $D^{j-1} \to \partial N$. The base-point of $\Emb^\nu(D^{j-1}, N)$ can be chosen to be
any embedding that is parallel to an embedding in $\partial N$ (rel $\partial$).  
The $\nu$ indicates the embedding comes equipped with a normal vector
field, standard on the boundary.  The space $\Emb(D^j, (S^{n-j} \times D^j) \natural N)$ is a space of 
cocores for the handle attachment 
$(S^{n-j} \times D^j) \natural N = N \cup H^{n-j}$, i.e. it is the space of smooth embeddings of 
$D^j$ in $(S^{n-j} \times D^j) \natural N$
such that the boundary of $D^j$ is sent to $\{*\} \times \partial D^j$ where $* \in S^{n-j}$ is some point disjoint from the
mid-ball of the boundary connect-sum. 
\end{thm}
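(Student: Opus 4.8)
The plan is to exhibit the two sides of the asserted equivalence as, respectively, the fibre and (up to a handle reinterpretation) the total space of a single Serre fibration whose total space is contractible, following verbatim the structure of Cerf's treatment of the $j=n$ case recalled above. Concretely, I would consider the restriction-to-the-flat-face map
$$\Emb(D^j, N \setminus \nu D^{j-1}) \to \Emb(\HD^j, N) \to \Emb^{\nu}(D^{j-1}, N),$$
where $\Emb(\HD^j, N)$ denotes embeddings of the half-disc that send the round face into $\partial N$ by the prescribed map, transversely, and the right-hand map records the restriction to the flat face, remembered together with the inward normal data the flat face inherits (this is why the target carries the $\nu$). That this map is a Serre fibration is the parametrised isotopy-extension theorem; I would cite \cite{Ce0} and the parametrised version of the argument in \cite{Hir}, exactly as in the discussion of the $j=n$ fibration $\Diff(\HD^n)\to\Emb(\HD^n,D^n)\to\Emb(D^{n-1},D^n)$ above. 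The step needing genuine care is the identification of the fibre: the honest fibre is the subspace of $\Emb(\HD^j,N)$ agreeing with a fixed neat embedding on the flat face, and I would pass from it to $\Emb(D^j, N\setminus\nu D^{j-1})$ by a tubular-neighbourhood (blow-up) argument --- thicken the fixed flat embedding to a tubular neighbourhood $\nu D^{j-1}$ of the corresponding disc, drill it out, and observe that what remains of $\HD^j$ is a genuinely neatly embedded $D^j$ in $N\setminus\nu D^{j-1}$, the normal framing being absorbed into the choice of tube recorded downstairs.

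Next I would verify that the total space $\Emb(\HD^j,N)$ is contractible, which is the half-disc analogue of uniqueness of collars and tubular neighbourhoods: any such embedding is isotopic, rel the round face, to its restriction to an arbitrarily small neighbourhood of the round face, where the straight-line homotopy to the standard linear model is through embeddings. This is exactly Cerf's contractibility statement \cite{Ce1, Ce2} for $\Emb(\HD^n,D^n)$, and the argument does not see $j$ nor the replacement of $D^n$ by $N$. With the total space contractible, the long exact sequence of the fibration gives $\pi_i \Emb(D^j, N\setminus\nu D^{j-1}) \cong \pi_{i+1}\Emb^{\nu}(D^{j-1},N)$ for all $i\ge 0$, hence a weak equivalence $\Emb(D^j, N\setminus\nu D^{j-1}) \simeq \Omega\Emb^{\nu}(D^{j-1},N)$; since all the spaces in sight have the homotopy type of CW complexes \cite{Pa2}, this is a genuine homotopy equivalence. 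The loop space is based at the restriction of the standard half-disc to its flat face, a boundary-parallel embedding, and because any two boundary-parallel embeddings of $D^{j-1}$ into $N$ are isotopic rel $\partial$, the choice of such a base-point is immaterial --- giving the final clause of the statement.

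Finally I would carry out the handle bookkeeping. Drilling a tubular neighbourhood of a boundary-parallel $D^{j-1}\hookrightarrow N$ removes a trivially attached $(j-1)$-handle; turning that handle upside down presents $N\setminus\nu D^{j-1}$ as $N$ with a trivially attached $(n-j)$-handle, i.e. as $(S^{n-j}\times D^j)\natural N$, and under this diffeomorphism the neat $D^j$ of the fibre becomes a cocore of the attached handle, with $\partial D^j$ carried to $\{*\}\times\partial D^j$ for a point $*\in S^{n-j}$ off the connect-sum ball (Figure \ref{chand}). Substituting this identification into the equivalence of the previous paragraph yields the theorem. I expect the fibre identification in the first step to be the main obstacle: one must make the tubular-neighbourhood comparison respect the prescribed boundary behaviour on the round face and the normal vector field along the flat face simultaneously, and then check it is compatible with the fibration structure; everything else is a direct transcription of Cerf's argument with $D^n$ replaced by $N$ and all the dimensions left generic.
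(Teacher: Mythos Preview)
Your proposal is correct and follows essentially the same route as the paper: the argument there is precisely the half-disc fibration $\Emb(D^j, N\setminus\nu D^{j-1}) \to \Emb(\HD^j, N) \to \Emb^{\nu}(D^{j-1}, N)$ with contractible total space (via the collar/tubular-neighbourhood contraction), together with the same tubular-neighbourhood identification of the fibre and the same handle-turning to rewrite $N\setminus\nu D^{j-1}$ as $(S^{n-j}\times D^j)\natural N$. The paper additionally records explicit maps in both directions (isotopy extension one way, fibering $D^j$ by parallel $D^{j-1}$'s the other), but these are the standard descriptions of the connecting map and its inverse for such a fibration, not a different proof.
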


Alternatively, one could express the theorem in the `reductionist' form
$$\Emb(D^j, M) \simeq \Omega \Emb^{\nu}(D^{j-1}, M \cup H^{n-j+1})$$
i.e. by writing $M = (S^{n-j} \times D^j) \natural N$, then 
$N = M \cup H^{n-j+1}$, i.e. we derive $N$ from $M$ by adding a canceling handle.
Thus for the homotopy-equivalence to hold we need $M$ to admit a canceling handle, 
i.e. for an element $f \in \Emb(D^j, M)$ the restriction to the boundary is an embedding
$f_{|\partial D^j} : S^{j-1} \to \partial M$ and there must admit an embedding
$S^{n-j} \to \partial M$ with a trivial normal bundle that transversely intersects $f_{|\partial D^j}$
in a single point.  In the recent `light-bulb theorem' literature the embedded $S^{n-j}$ is simply
called a {\it transverse sphere} \cite{Ga}.  This version of Theorem \ref{main1} appears in \cite{KT}. 

A homotopy-equivalence can be expressed as a map in either direction.  The map
$\Omega \Emb^{\nu}(D^{j-1}, N) \to \Emb(D^j, N \setminus \nu D^{j-1})$ is induced by isotopy extension i.e. one 
lifts the element of $\Omega \Emb^{\nu}(D^{j-1}, N)$ to a path in $\Emb(\HD^j, N)$, starting at the base-point of
$\Emb(\HD^j, N)$. Drilling the flat face from the endpoint of this path gives the element of
$\Emb(D^j, N \setminus \nu D^{j-1})$. 
 
The map back
$\Emb(D^j, N \setminus \nu D^{j-1}) \to \Omega \Emb^{\nu}(D^{j-1}, N)$ involves thinking of $D^j$ as fibered by parallel 
copies of $D^{j-1}$ and taking those restrictions, and composing with the inclusion $N \setminus \nu D^{j-1} \to N$. 
The paper \cite{BG} gives a detailed account
in the $\Emb(\HD^j, D^n)$ case, and \cite{KT} gives a detailed account using the `reductionist' perspective for $\Emb(\HD^j, N)$. 

\begin{prop}\label{cerf1}
The co-dimension $2$ scanning map
$$\Diff(D^n) \to \Omega^2 \Emb^\nu(D^{n-2}, D^n)$$
induces a split injection on all homotopy and homology groups, for $n \geq 2$. The
map admits a left homotopy-inverse.
\end{prop}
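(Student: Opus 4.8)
\subsection*{Proof proposal}

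The plan is to exhibit the co-dimension $2$ scanning map as a composite of three maps that have already been discussed, and then to read off the conclusion from the structural properties of those three maps. Write the scanning map as
$$\scalemath{0.85}{\Diff(D^n) \xrightarrow{\,s_1\,} \Omega\Emb(D^{n-1}, D^n) \xrightarrow{\,\Omega i\,} \Omega\Emb(D^{n-1}, S^1\times D^{n-1}) \xrightarrow{\,\Omega s_2\,} \Omega^2\Emb^{\nu}(D^{n-2}, D^n),}$$
where $s_1$ is Cerf's half-disc scanning map (map (1) of the introduction), $i$ is the codomain-change map associated to attaching a trivial $1$-handle (map (2)), and $s_2$ is the scanning map coming from the half-disc fibration $\Emb(D^{n-1}, S^1\times D^{n-1}) \to \Emb(\HD^{n-1}, D^n) \to \Emb^{\nu}(D^{n-2}, D^n)$ (map (3)), where we have used the diffeomorphism $D^n\setminus\nu D^{n-2}\cong (S^1\times D^{n-1})\natural D^n\cong S^1\times D^{n-1}$. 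The first step is to check that this composite agrees with the scanning map in the statement; depending on conventions this may simply be its definition, and otherwise it is a routine unwinding of the half-disc constructions of this section.

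Next I would invoke three structural facts established above. The map $s_1$ is a homotopy equivalence, from the half-disc fibration $\Diff(\HD^n)\to\Emb(\HD^n, D^n)\to\Emb(D^{n-1}, D^n)$ together with the contractibility of $\Emb(\HD^n, D^n)$ and the rounding-the-corners equivalence $\Diff(\HD^n)\simeq\Diff(D^n)$. The map $s_2$ is a homotopy equivalence, being the instance $j=n-1$, $N=D^n$ of Theorem \ref{main1}. And the map $i$ is a homotopy retract, i.e.\ it admits a left homotopy inverse $r$ with $r\circ i\simeq\mathrm{id}$ --- this is Cerf's handle-cancellation result, recalled above as the statement that map (2) is a homotopy retract. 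Since $\Omega$ is a functor that preserves homotopies, $\Omega r$ is a left homotopy inverse of $\Omega i$, and therefore the displayed composite has left homotopy inverse $s_1^{-1}\circ(\Omega r)\circ(\Omega s_2)^{-1}$, where $s_1^{-1}$ and $(\Omega s_2)^{-1}$ denote homotopy inverses. Thus the co-dimension $2$ scanning map is itself a homotopy retract.

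The remaining assertions are then formal: a map admitting a left homotopy inverse induces, on every homotopy group and every homology group, a homomorphism with a left inverse, hence a split injection (and an injection on $\pi_0$). I expect that the only genuinely non-formal input is the fact that map (2) is a homotopy retract; this is the single place where a canceling-handle construction --- scanning the attached $1$-handle in the $S^1$ direction, so as to push an embedded disc off the handle in a parametrised and natural way --- is required, rather than the formal half-disc fibration machinery. For that step I would follow \cite{Ce2} and the detailed treatments in \cite{BG} and \cite{KT}; everything else is bookkeeping with the homotopy equivalences of Theorem \ref{main1} and of this section.
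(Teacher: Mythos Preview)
Your proposal is correct and follows essentially the same route as the paper: factor the scanning map as the composite $\Diff(D^n)\to\Omega\Emb(D^{n-1},D^n)\to\Omega\Emb(D^{n-1},S^1\times D^{n-1})\to\Omega^2\Emb^\nu(D^{n-2},D^n)$, observe the outer maps are equivalences (Theorem~\ref{main1} with $j=n$ and $j=n-1$), and use that the middle inclusion is a homotopy retract. The one point where the paper is more explicit than you are is the construction of the left inverse $r$: rather than ``pushing the disc off the handle'' (which is not obviously possible, since an element of $\Emb(D^{n-1},S^1\times D^{n-1})$ can thread through the handle many times), the paper lifts the embedding to the universal cover $\Real\times D^{n-1}$ and identifies this with $D^n$ minus two boundary points.
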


Proposition \ref{cerf1} is a space-level statement of Proposition 6 of \cite{Ce2}. 
When  $n=2$, the two-fold scanning map $\Diff(D^2) \to \Omega^2 \Emb^{\nu}(D^0, D^2) \equiv \Omega^2 GL_2(\Real)$
is the Smale-Hirsch map.  Since $\Omega^2 GL_2(\Real)$ is contractible, this is Smale's theorem that
$\Diff(D^2)$ is contractible.  Since $\Diff(S^2) \simeq O_3 \times \Diff(D^2)$ (this is a standard linearisation
argument, see \cite{fam}), this proves Smale's Theorem $\Diff(S^2) \simeq O_3$. 

When $n > 3$ the
forgetful map $\Emb^\nu(D^{n-2}, D^n) \to \Emb(D^{n-2}, D^n)$ is a homotopy-equivalence, since the
fiber has the homotopy-type of $\Omega^{n-2} S^1$. When
$n=2$ or $n=3$, the double-looping of the map $\Omega^2 \Emb^\nu(D^{n-2}, D^n) \to \Omega^2 \Emb(D^{n-2}, D^n)$
is a homotopy-equivalence, as the fiber has the homotopy-type of $\Omega^2 \Omega^{n-2} S^1$. 

\begin{cor}(Smale)\label{smthm} $\Diff(D^2)$ is contractible, i.e. $\Diff(S^2)$ has the homotopy-type of its
linear subgroup $O_3$. 
\end{cor}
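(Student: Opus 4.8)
The plan is to obtain the corollary as a direct specialisation of Proposition \ref{cerf1} to the degenerate case $n=2$, where the target of the scanning map is visibly weakly contractible. First I would instantiate Proposition \ref{cerf1} with $n=2$: the codimension-two scanning map
$$\Diff(D^2) \to \Omega^2 \Emb^\nu(D^0, D^2)$$
admits a left homotopy-inverse, hence in particular is injective on every homotopy group. It then remains only to understand the target. Since $D^0$ is a single point with empty boundary, an element of $\Emb^\nu(D^0, D^2)$ is a point of the interior of $D^2$ together with a nonzero normal vector; recording this data together with the remaining first-order (jet) data identifies $\Emb^\nu(D^0, D^2)$ with $GL_2(\Real)$, and under this identification the scanning map becomes the Smale--Hirsch pointwise-derivative map $\Diff(D^2) \to \Omega^2 GL_2(\Real)$. (This is exactly the identification asserted immediately after the statement of Proposition \ref{cerf1}.)

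Next I would check that $\Omega^2 GL_2(\Real)$ is weakly contractible. The inclusion $O_2 \hookrightarrow GL_2(\Real)$ is a homotopy-equivalence, and $O_2$ has two path-components, each of which deformation-retracts onto a circle (the identity component onto $SO_2 = S^1$). Since $\pi_m S^1 = 0$ for $m \geq 2$, and the positive-degree homotopy groups of $O_2$ are those of its identity component, we get $\pi_k \Omega^2 GL_2(\Real) \cong \pi_{k+2} GL_2(\Real) = 0$ for all $k \geq 0$; thus $\Omega^2 GL_2(\Real)$ has all homotopy groups trivial. Combined with the split injectivity from the first step, this forces $\pi_k \Diff(D^2) = 0$ for all $k$. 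Because diffeomorphism groups of compact manifolds have the homotopy-type of countable CW-complexes (Palais \cite{Pa2}, as recalled in Section \ref{intro}), the Whitehead theorem upgrades this to an honest contraction $\Diff(D^2) \simeq *$. Finally I would invoke the standard linearisation homotopy-equivalence $\Diff(S^2) \simeq O_3 \times \Diff(D^2)$ from \cite{fam} (also recalled above) to conclude $\Diff(S^2) \simeq O_3$.

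There is, honestly, no serious obstacle here: all of the geometric work is contained in Proposition \ref{cerf1} (Cerf's half-disc fibration plus the contractibility of $\Emb(\HD^n, D^n)$), which we are free to assume. The only points that warrant a moment's care are (i) recognising that the $j-1=0$ instance of $\Emb^\nu$ is $GL_2(\Real)$ and that scanning degenerates to the Smale--Hirsch map in this case, so that the target is manifestly trivial; and (ii) the passage from "injective on homotopy groups with weakly contractible target" to an actual null-homotopy, which is precisely where the Palais CW-homotopy-type statement is used. No Poincar\'e--Bendixson input is needed, which is the whole point of running Smale's theorem through Cerf's machinery.
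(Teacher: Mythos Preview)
Your proof is correct and follows essentially the same route as the paper: specialise Proposition~\ref{cerf1} to $n=2$, identify the target with $\Omega^2 GL_2(\Real)$ via the Smale--Hirsch map, observe it is weakly contractible, and then invoke Palais/Whitehead and the linearisation splitting $\Diff(S^2)\simeq O_3\times\Diff(D^2)$. You have simply been more explicit than the paper about the Whitehead step and the computation of $\pi_*GL_2(\Real)$, both of which the paper had already spelled out in the Introduction.
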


\begin{proof}(of Proposition \ref{cerf1}) 
The proof follows from forming
a composite of functions involving
the homotopy-equivalence $\Diff(D^n) \to \Omega \Emb(D^{n-1}, D^n)$ (i.e. Theorem \ref{main1}, $N=D^n$, $j=n$) 
with the induced map on loop spaces from Theorem \ref{main1}, where $N=D^n$ and $j=n-1$, 
$$\Emb(D^{n-1}, S^1 \times D^{n-1}) \to \Omega \Emb^{\nu}(D^{n-2}, D^n).$$
Given that the unit normal fibers are copies of $S^1$, we can discard the normal vector fields, i.e. 
the forgetful map $\Omega^2 \Emb^{\nu}(D^{n-2}, D^n) \to \Omega^2 \Emb(D^{n-2}, D^n)$ is a homotopy-equivalence. 
Think of $S^1 \times D^{n-1}$ as $D^n$ union a $1$-handle, this gives an inclusion 
$\Emb(D^{n-1}, D^n) \to \Emb(D^{n-1}, S^1 \times D^{n-1})$.
Thus we have a composable triple
$$\Diff(D^n) \to \Omega \Emb(D^{n-1}, D^n) \to \Omega \Emb(D^{n-1}, S^1 \times D^{n-1}) \to \Omega^2 \Emb(D^{n-2}, D^n).$$
The left homotopy-inverse of the map in the middle comes from thinking of the universal cover of $S^1 \times D^{n-1}$ as a copy of 
$\Real \times D^{n-1}$ which could also be thought of as as $D^n$ remove two points on its boundary, i.e. we have a map back 
$\Emb(D^{n-1}, S^1 \times D^{n-1}) \to \Emb(D^{n-1}, D^n)$. Since the two maps on the ends are
homotopy-equivalences, this gives us the result. 
\end{proof}

Cerf's proof of Smale's theorem (Corollary \ref{smthm}) is also highlighted in \cite{KK} \S 6.2.4.  When the co-dimension
of the embeddings are three or larger, sharp connectivity estimates for the scanning map exist.  See for example
\cite{BLR} pgs. 23--25, and the initial pages of Goodwillie's Ph.D thesis \cite{G}. The paper \cite{GKK} also includes
a detailed analysis of scanning maps for spaces of concordence embeddings, when the co-dimension is at least three.

The deloopings of the spaces $\Diff(D^n)$ and $\Emb(D^j, D^n)$ are studied in \cite{Sakai} and \cite{ST}. It would be interesting
to see if there are analogous retraction results for the deloopings of the scanning maps $\Diff(D^n) \to \Omega^{n-j} \Emb^{fr}(D^j, D^n)$. 
It is perhaps unlikely, but it is a basic question that deserves investigation.  

\begin{thm}\label{main3}The scanning map $\Emb(D^{n-1}, S^1 \times D^{n-1}) \to \Omega \Emb^\nu(D^{n-2}, S^1 \times D^{n-1})$ is the
inclusion portion of a homotopy-retraction, i.e. it induces split injections on all homotopy-groups for all $n \geq 2$. When
$n>2$ the $\nu$ can be dropped from the target space, i.e. the theorem remains true for embeddings without a normal vector field. 
\begin{proof}
By Theorem \ref{main1}, scanning gives us a homotopy equivalence 
$\Emb(D^{n-1}, S^1 \times D^{n-1}) \to \Omega \Emb^{\nu}(D^{n-2}, D^n)$. 
We construct an inclusion map $\Emb^{\nu}(D^{n-2}, D^n) \to \Emb^{\nu}(D^{n-2}, S^1 \times D^{n-1})$ by attaching a trivial
$1$-handle to $D^n$, i.e. thinking of $S^1 \times D^{n-1}$ as $D^n$ union a $1$-handle.  This inclusion is the inclusion portion
of a homotopy-retract, i.e. it has a left homotopy-inverse. The left homotopy inverse comes from lifting an embedding $D^{n-2} \to S^1 \times D^{n-1}$ 
to the universal cover, which we identify with a copy of $D^n$ with two points removed from the boundary.  
\end{proof}
\end{thm}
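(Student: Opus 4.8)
The plan is to exhibit the scanning map as the inclusion part of a homotopy‑retraction by comparing it with the homotopy‑equivalence of Theorem \ref{main1}. Take $N=D^n$ and $j=n-1$ there; since $(S^1\times D^{n-1})\natural D^n=S^1\times D^{n-1}$, this gives a homotopy‑equivalence
$$e\colon \Emb(D^{n-1},S^1\times D^{n-1})\xrightarrow{\ \simeq\ }\Omega\Emb^\nu(D^{n-2},D^n),$$
itself realised by scanning: one fibres the source $D^{n-1}$ by parallel $(n-2)$‑discs, restricts, keeps the remaining coordinate direction as the normal field, and then pushes the codomain forward along the inclusion $c\colon D^n\setminus\nu D^{n-2}\hookrightarrow D^n$ (the complement of a tube about a boundary‑parallel $(n-2)$‑disc being diffeomorphic to $S^1\times D^{n-1}$). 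Write $s\colon\Emb(D^{n-1},S^1\times D^{n-1})\to\Omega\Emb^\nu(D^{n-2},S^1\times D^{n-1})$ for the scanning map in the statement, which performs the same fibre‑and‑restrict but leaves the codomain untouched.

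The first and main step is to verify the compatibility $c_*\circ s=e$, where $c_*$ is the map induced by post‑composition with $c$ under the identification $S^1\times D^{n-1}\cong D^n\setminus\nu D^{n-2}$. This is essentially formal once the identifications are fixed: scanning is a construction on the \emph{source} disc, hence commutes with post‑composition on the target, and both $c_*\circ s$ and $e$ send $f$ to the loop $t\mapsto c\circ f|_{D^{n-2}_t}$ together with its induced normal field. The point requiring care is the bookkeeping of base‑points and of the fixed, isotopically‑trivial boundary condition: one must choose the base‑point of $\Emb^\nu(D^{n-2},S^1\times D^{n-1})$ inside the part of $\partial(S^1\times D^{n-1})$ that $c$ carries into $\partial D^n$, so that $c_*$ is a genuine based map between the two embedding spaces. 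Granting this, $e^{-1}\circ c_*$ is a left homotopy‑inverse of $s$, so $s_*$ is split‑injective on all homotopy and homology groups for every $n\geq 2$. Equivalently, and closer to the organisation of \cite{BG}, one may enlarge the codomain by a trivial $1$‑handle, $\Emb^\nu(D^{n-2},D^n)\to\Emb^\nu(D^{n-2},S^1\times D^{n-1})$, and produce a left homotopy‑inverse of this enlargement by lifting embeddings of the simply‑connected disc $D^{n-2}$ to the universal cover $\Real\times D^{n-1}\cong D^n\setminus\{\text{two boundary points}\}$ and then isotoping the compact image into a standard $D^n$.

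For the final sentence, when $n>2$ I would use the forgetful fibration $\Emb^\nu(D^{n-2},S^1\times D^{n-1})\to\Emb(D^{n-2},S^1\times D^{n-1})$, whose fibre is the space of nowhere‑zero sections rel $\partial$ (with the fixed standard boundary value) of the normal bundle of an embedded $D^{n-2}$; this bundle is trivial of rank $2$, so the fibre is $\Map_*(S^{n-2},S^1)=\Omega^{n-2}S^1$. Looping once, the fibre of $\Omega\Emb^\nu(D^{n-2},S^1\times D^{n-1})\to\Omega\Emb(D^{n-2},S^1\times D^{n-1})$ becomes $\Omega^{n-1}S^1$, which is contractible as soon as $n\geq 3$; hence $\Omega$ of the forgetful map is a homotopy‑equivalence for $n>2$, and composing it with $s$ shows the $\nu$‑free scanning map is again the inclusion part of a homotopy‑retraction. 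I expect the only real obstacle to be that first step: making the identification $S^1\times D^{n-1}\cong D^n\setminus\nu D^{n-2}$, the inclusion $c$, and the base‑point and boundary data precise enough that $c_*\circ s=e$ holds on the nose (or up to a canonical homotopy); everything after that — the left homotopy‑inverse and the $n>2$ simplification — is formal.
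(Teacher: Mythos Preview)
Your proof is correct and follows essentially the same strategy as the paper's: both reduce to the homotopy-equivalence $e$ of Theorem~\ref{main1} (case $N=D^n$, $j=n-1$) and then observe that the scanning map $s$ of the theorem differs from $e$ only by a change-of-codomain map between $S^1\times D^{n-1}$ and $D^n$ which admits a one-sided homotopy inverse. The one genuine difference is in which direction you run that comparison. You factor $e=c_*\circ s$ using the filling inclusion $c\colon S^1\times D^{n-1}\cong D^n\setminus\nu D^{n-2}\hookrightarrow D^n$, so that $e^{-1}\circ c_*$ retracts $s$ directly; the paper instead factors $s\simeq\Omega i\circ e$ via the handle-attachment inclusion $i\colon\Emb^\nu(D^{n-2},D^n)\to\Emb^\nu(D^{n-2},S^1\times D^{n-1})$ and then produces a left inverse of $i$ by lifting to the universal cover $\Real\times D^{n-1}$. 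Your version is marginally more direct since it avoids the universal-cover step, and you in fact mention the paper's variant as your ``equivalently'' alternative. Your treatment of the $n>2$ clause via the $\Omega^{n-1}S^1$ fibre is also correct and more explicit than what the paper records inside the proof itself.
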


The proof of the above theorem is largely a duplicate of the proof of Theorem \ref{cerf1}. 
Similarly, this argument allows us to identify the map $\Emb(D^{n-1}, S^1 \times D^{n-1}) \to \Omega \Emb(D^{n-2}, S^1 \times D^{n-1})$
with the scanning map.  

Notice when $n=2$, the above scanning map is a homotopy-equivalence by Gramain \cite{Gramain}. When $n=3$ it follows 
by Hatcher's work \cite{Hat} that the scanning map is a homotopy-equivalence, indeed, both spaces are contractible.  

When $n \geq 4$ far less is known about such scanning maps. In \cite{BG} and \cite{BG2} the 
mapping-class group $\pi_0 \Diff(S^1 \times D^3)$ 
was shown to be not finitely generated via the map $\pi_0 \Diff(S^1 \times D^3) \to \pi_2 \Emb(D^1, S^1 \times D^3)$.  
Above, we see that the intermediate map
$\pi_0 \Diff(S^1 \times D^3) \to \pi_1 \Emb(D^2, S^1 \times D^3)$ has kernel isomorphic to $\pi_0 \Diff(D^4)$, this follows
from `handle-attachment homotopy-equivalence' $\Diff(S^1 \times D^{n-1}) \simeq \Diff(D^n) \times \Emb(D^{n-1}, S^1 \times D^{n-1})$ 
described in \cite{BG}.  The study of our scanning map $\pi_0 \Diff(S^1 \times D^3) \to \pi_2 \Emb(D^1, S^1 \times D^3)$ 
is thus reduced to the final step
$\pi_1 \Emb(D^2, S^1 \times D^3) \to \pi_2 \Emb(D^1, S^1 \times D^3)$, i.e. the loop space functor applied to the scanning map 

$$\Emb(D^2, S^1 \times D^3) \to \Omega \Emb(D^1, S^1 \times D^3).$$

One might attempt to apply the reductionist version of Theorem \ref{main1} to construct a homotopy-equivalence
$\Emb(D^2, S^1 \times D^3) \simeq \Omega \Emb^{\nu}(D^1, S^1 \times D^3 \cup H^3)$, but since the boundary circle for the embeddings
of $\Emb(D^2, S^1 \times D^3)$ is homologically trivial, it does not have the required 
$2$-sphere intersecting the embedding transversely in a single point.  
Alternatively, the embeddings in $\Emb(D^2, S^1 \times D^3)$ are not the cocore of a $2$-handle attachment, 
so we can not appeal to the primary version of Theorem \ref{main1}, either.  That said, we do know that the map 
$\Emb(D^2, S^1 \times D^3) \to \Omega \Emb(D^1, S^1 \times D^3)$
is homotopically non-trivial (\cite{BG}, \cite{BG2}) as the induced map on $\pi_1$ maps onto an infinitely-generated subgroup, 
so further study of these scanning maps is warranted.  

The work of Fresse-Turchin-Willwacher \cite{FTW} describes the delooping of the homotopy-fiber of 
the map from embeddings to immersions $\Emb(D^j, D^n) \to \Imm(D^j, D^n) \simeq \Omega^j V_{n,j}$, giving a fairly concrete
description of its  rational homotopy-type in the language of graph complexes when $n-j>2$.  In principle
this should give us some useful information on the co-dimension one scanning map 
$\Emb(D^j, D^n) \to \Omega \Emb(D^{j-1}, D^n)$ in rational homotopy, although our lack of understanding of
the induced map $\Emb(D^j, D^n) \to \Imm(D^j, D^n)$ in rational homotopy (when $j>1$) may be a limiting factor at present.   
A related topic is the `Freudenthal Suspension map' $\Emb(D^1, D^n) \to \Omega \Emb(D^1, D^{n+1})$ \cite{fam} 
which is defined via two canonical unknotting operations.  This map is known to be zero on rational homotopy 
(unpublished at this time), yet the map itself could potentially be homotopically non-trivial.


\section{Bott handles and miscellany}\label{miscsec}

The homotopy-equivalence $\Diff(D^n) \simeq \Omega \Emb(D^{n-1}, D^n)$ can be extrapolated to a homotopy-equivalence
$\Diff(I \times N) \simeq \Omega \Emb(\{\frac{1}{2}\} \times N, I \times N)$, and scanning maps

$$\Diff(D^k \times N) \to \Omega \Emb(D^{k-1} \times N,  D^k \times N) \to 
  \cdots \to \Omega^j \Emb(D^{k-j} \times N, D^k \times N).$$
 
Whereas the scanning of Section \ref{mainsec} could be viewed as an argument where the intermediate space is that of the space of
canceling handles, i.e. vanilla Morse theory, the scanning above has intermediate space the space Bott-style canceling handles, 
i.e. the kinds of
handles that occur with Bott-style Morse functions (functions on manifolds where the critical point sets are manifolds and
the Hessian is non-degenerate on these critical submanifolds \cite{Bott}).  For Bott-style Morse functions `handle' attachments 
are disc bundles over manifolds, whereas in standard Morse theory one attaches disc bundles 
over points, i.e. plain discs. Specifically, an adjunction where one attaches a disc-bundle over $M$, $M \ltimes D^k$ to another manifold 
$N$ along an embedding $M \ltimes \partial D^k \to \partial N$
is what is called a Bott-style handle attachment \cite{Bott}, as these sorts of attachments occur for Bott-type Morse functions, i.e. functions
$W \to \Real$ whose critical points are manifolds and the Hessian is non-degenerate on the normal bundle fibers. Bott-style
Morse functions typically occur when functions have symmetry, for example the trace of a matrix is a Bott-style Morse function
on the orthogonal group $O_n$. The critical points of this function are the square roots of the identity matrix $I$, thus copies of 
Grassman manifolds.  As a concrete example, the trace functional expresses $SO_3$ as the tautological line bundle over 
$\mathbb{R}P^2$ union a $3$-handle.
 
The analogue to Theorem \ref{main1} in the Bott case has the form of a homotopy-equivalence
$$\Emb(M \ltimes D^k, N \setminus \nu (M \ltimes D^{k-1})) \simeq \Omega \Emb(M \ltimes D^{k-1}, N).$$

Given that our scanning maps are highly structured, they would appear to be a potentially useful
device for exploring the homotopy-types of diffeomorphism groups like $\Diff(D^n)$, $\Diff(S^1 \times D^n)$ and generally
product manifolds $\Diff(N \times D^k)$, in particular for studying spaces of pseudoisotopies.  From this perspective 
there is perhaps a similarly overlooked element of Embedding Calculus \cite{MV} \cite{W} that is relevant. 

For example, given a manifold $M$, let $\mathcal{O}_k(M)$ be the category of open subsets of $M$ diffeomorphic to a disjoint union
of at most $k$ open balls, arrows given by inclusion maps.  Given $U \in \mathcal{O}_k(M)$ let 
$F(U)$ be $\Emb(U \times D^j, M \times D^j)$, i.e.
smooth embeddings of $U \times D^j$ in $M \times D^j$ that restrict to the standard inclusion on $U \times \partial D^j$.  
The $k$-th stage of the Taylor tower could be taken to be $T_k F(U) = \holim_{V \in \mathcal{O}_k(U)} F(V)$.   From this perspective, 
the scanning map is the evaluation map to the first stage of the Taylor tower.  Higher stages of the Taylor tower are built from 
spaces of generalized string links (in the sense that the Goodwillie-Weiss-Klein embedding calculus is built from 
configuration spaces),  and similarly the layers will be a relative section space.  This Taylor tower maps
to the GWK-Taylor tower so it should converge when the co-dimension of the embeddings are sufficiently large. Minimally
from the above it will have embeddings as a homotopy retract.   The
rate of convergence  of this Taylor tower we suspect will often be greater -- for example by Cerf's theorem
$\Diff(D^n) \simeq \Omega \Emb(D^{n-1}, D^n)$ the first stage when $M = I$ is homotopy-equivalent to 
$\Diff(I \times D^{j-1}) \simeq \Diff(D^j)$. 
The potential for this framework is that it may provide more manageable
inductive steps for practical computations of homotopy and homology groups of embedding spaces, 
as one is no longer comparing an embedding space 
directly with configuration spaces. Spaces of string links have been the subject of some recent investigations
by Koytcheff \cite{K1}, Turchin and Tsopm\'en\'e \cite{TPA} \cite{K1}, including a description of some of their low-dimensional 
homotopy groups \cite{K1} as well as an operad action \cite{K2}, so we may not be far removed from being able to analyze these 
{\it string link Taylor towers.}

String links appear in two essential ways in both \cite{BG} and 
\cite{BG2}.  Specifically, barbell diffeomorphism families are the induced diffeomorphisms coming from 
the low-dimensional homotopy groups of 
spaces of $2$-component string-links.  Moreover, the map we use to detect our diffeomorphisms of $S^1 \times D^{n-1}$ has the form 
$\Diff(S^1 \times D^{n-1}) \to \Omega^{n-2} \Emb(D^1, S^1 \times D^{n-1})$.  If we take the lifts of an element 
of $\Emb(D^1, S^1 \times D^{n-1})$ to the universal cover, we get an equivariant, infinite-component string link in 
$\Real \times D^{n-1}$.  Thus string links would appear to be a relatively efficient machine for investigating embedding
spaces and diffeomorphism groups.  It would be very interesting to see the relative rate of convergence of the above
Taylor towers, compared to the standard Embedding Calculus \cite{GKW}. 

A small comment on the relationship between the restriction maps $\Diff(S^n) \to \Emb(S^j, S^n)$ and the
Cerf half-disc fibrations.  When $j<n$ these fibrations are null-homotopic via a `shrinking support' argument
\cite{fam}.  This is closely related to the half-disc fibration.  Specifically, if we replace the above diffeomorphism
group and embedding space with their `long' version, and require the embeddings to have trivialized normal bundles
we have the fibration $\Diff(D^n) \to \Emb^{fr}(D^j, D^n)$.  This fibration has fiber homotopy-equivalent to 
$\Diff(S^{n-j-1} \times D^{j+1})$. There is a cancelling-handle homotopy-equivalence
$$\Diff(S^{n-j-1} \times D^{j+1}) \simeq \Diff(D^n) \times \Emb^{fr}(D^{j+1}, S^{n-j-1} \times D^{j+1}).$$
Lastly, let $\Emb^{fr}(\HD^{j+1}, D^n)$ be the half-disc embedding space where the half-discs are equipped with
trivialized normal bundles.  Then we have a fibre sequence
$$ \Emb^{fr}(D^{j+1}, S^{n-j-1} \times D^{j+1}) \to \Emb^{fr}(\HD^{j+1}, D^n) \to \Emb^{fr}(D^j, D^n). $$
Like in the unframed case, the space $\Emb^{fr}(\HD^{j+1}, D^n)$ is contractible.

This gives us a little commutative diagram of homotopy fiber sequences (three top vertical maps fibrations, 
three rightmost horizontal maps are also fibrations)
$$ \xymatrix{\Emb^{fr}(D^{j+1}, S^{n-j-1} \times D^{j+1}) \ar[r] & \Emb^{fr}(\HD^{j+1}, D^n) \ar[r] & \Emb^{fr}(D^j, D^n) \\
  \Diff(S^{n-j-1} \times D^{j+1}) \ar[r] \ar[u] & \Diff(D^n) \ar[r] \ar[u] & \Emb^{fr}(D^j, D^n) \ar[u] \\
  \Diff(D^n) \ar[r] \ar[u] & \Diff(D^n) \ar[r] \ar[u] & \{*\} \ar[u]}$$
i.e. we are asserting that the fibration $\Diff(D^n) \to \Emb^{fr}(D^j, D^n)$ is simply 
the half-disc fibration $\Emb^{fr}(\HD^{j+1}, D^n) \to \Emb^{fr}(D^j, D^n)$ but where we have inserted a trivial
$\Diff(D^n)$ factor in the total space and fiber. 

\section{The Sch\"onflies monoid}\label{schoen}

We end with the observation, implicit in \cite{BG}, that the monoid $\pi_0 \Emb(S^{n-1}, S^n)$ using the connect-sum operation, that
this is a group for all $n \geq 2$, as it is unclear if a proof of this statement exists in the literature. 
For $n \neq 4$ this group is known to be isomorphic to $\pi_0 \Diff(D^{n-1})$.  
In dimension $n=4$ the Sch\"onflies problem is equivalent to stating this group is trivial.  
  
The connect-sum operation on
$\pi_0 \Emb(S^{n-1}, S^n)$ has a description as a relative surgery (i.e. performing surgery on both the ambient manifold and submanifold
at the same time).  One embeds pairs $(D^n, D^{n-1})$ in the
pairs $(S^n, f(S^{n-1}))$ and $(S^n, g(S^{n-1}))$ respectively. 
Given that our embeddings are parametrized this requires a linearization
operation relative to the functions $f$ and $g$ about the embeddings $D^{n-1} \to f(S^{n-1})$ and
$D^{n-1} \to g(S^{n-1})$ respectively, as well as an identification of $S^n \# S^n$ with $S^n$.  

To minimize the overhead of formalism we will assume the homotopy-equivalence \cite{fam}
$$\Emb(S^{n-1}, S^n) \simeq SO_{n+1} \times \Emb(D^{n-1}, D^n)$$
which follows from a linearization argument.

This homotopy-equivalence tells us $\pi_0 \Emb(S^{n-1}, S^n) \simeq \pi_0 \Emb(D^{n-1}, D^n)$, 
allowing us to define the monoid structure on $\pi_0 \Emb(D^{n-1}, D^n)$.

The space $\Emb(D^{n-1}, D^n)$ can be thought of as the smooth embeddings $\Real^{n-1} \to \Real^n$ that agrees with the
standard inclusion $\{0\} \times \Real^{n-1} \subset \Real^n$ outside of $D^{n-1}$, and maps $D^{n-1}$ into $D^n$.  
We endow $\Emb(D^{n-1}, D^n)$ with a binary
operation (indeed many such) by stacking embeddings.  To stack two elements of $\Emb(D^{n-1}, D^n)$ one needs rescaling and 
translation to make the operation precise \cite{fam}. 

If one combines all such operations on has an action of the operad of $(n-1)$-discs on 
$\Emb(D^{n-1}, D^n)$.  The connect-sum operation 
is the induced monoidal structure on $\pi_0 \Emb(D^{n-1}, D^n)$. The neutral element is the linear embedding.  
This connect-sum operation generalizes 
directly to all embedding spaces $\pi_0 \Emb(S^j, S^n)$. When $n=j+2$ it is the classical connect-sum 
operation on co-dimension two knots, and when $j=n$ it is the composition operation on $\pi_0 \Diff(S^n)$. 
    
The proof is a small variation on
the proofs of Proposition \ref{cerf1} and Theorem \ref{main3}. 
 The inclusion from Proposition \ref{cerf1} 

$$\Emb(D^{n-1}, D^n) \to \Emb(D^{n-1}, S^1 \times D^{n-1})$$ 

is compatible with stacking, i.e. it induces a map
of monoids on path-components.  The space $\Emb(D^{n-1}, S^1 \times D^{n-1})$ has the homotopy-type of $\Omega \Emb^{\nu}(D^{n-2}, D^n)$
by Theorem \ref{main1}.  The space $\Omega \Emb^{\nu}(D^{n-2}, D^n)$ has {\it two} stacking operations, i.e. one can `stack' 
using the loop-space parameter,  or stack using the analogous stacking operation on the space $\Emb^{\nu}(D^{n-2}, D^n)$.  These 
two operations are homotopic.  In introductory algebraic topology courses, 
one uses this type of argument to show the fundamental group of a topological group must be abelian. It is often called an Eckmann-Hilton
argument. Another way to say this is that the space $\Omega \Emb^{\nu}(D^{n-2}, D^n)$ has an action of the operad of $2$-cubes, where
the action restricts to either concatenation construction, depending on the position of the cubes.

\begin{thm}\label{monoidstruc} The monoid structure on $\pi_0 \Emb(S^{n-1}, S^n)$ coming from the connect-sum operation, this is
a group for all $n \geq 2$.  Moreover, there is an onto-homomorphism 
$$\pi_1 \Emb^{\nu}(D^{n-2}, D^n) \to \pi_0 \Emb(D^{n-1}, D^n) \simeq \pi_0 \Emb(S^{n-1}, S^n).$$
\end{thm}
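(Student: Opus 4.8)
The plan is to exhibit $\pi_0\Emb(S^{n-1},S^n)$ as the image of a group under a surjective homomorphism of monoids, from which the group structure is automatic. First I would invoke the linearisation equivalence $\Emb(S^{n-1},S^n)\simeq SO_{n+1}\times\Emb(D^{n-1},D^n)$, which identifies $\pi_0\Emb(S^{n-1},S^n)$ with $\pi_0\Emb(D^{n-1},D^n)$ and, by construction, carries connect-sum to the stacking operation; so it suffices to treat $(\pi_0\Emb(D^{n-1},D^n),\text{stacking})$. The two maps driving the argument are the trivial-$1$-handle inclusion $j\colon\Emb(D^{n-1},D^n)\to\Emb(D^{n-1},S^1\times D^{n-1})$ and the left homotopy inverse $r$ provided by Proposition \ref{cerf1}, obtained by lifting an embedding to the universal cover $\Real\times D^{n-1}$ of $S^1\times D^{n-1}$, thought of as $D^n$ with two boundary points removed. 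Both $j$ and $r$ are compatible with the respective stacking operations, so they induce monoid homomorphisms $j_*,r_*$ on $\pi_0$ with $r_*j_*=\mathrm{id}$; in particular $r_*$ is surjective.

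Next I would identify the codomain: Theorem \ref{main1} with $N=D^n$ and $j=n-1$ gives $\Emb(D^{n-1},S^1\times D^{n-1})\simeq\Omega\Emb^\nu(D^{n-2},D^n)$, so $\pi_0\Emb(D^{n-1},S^1\times D^{n-1})\cong\pi_1\Emb^\nu(D^{n-2},D^n)$, a group under the operation induced by loop concatenation. The remaining point, which is the content of \cite{BG}, is that scanning carries the stacking operation on $\Emb(D^{n-1},S^1\times D^{n-1})$ to one of the two compatible ($2$-cubes) multiplications on $\Omega\Emb^\nu(D^{n-2},D^n)$ — loop concatenation and the stacking induced from $\Emb^\nu(D^{n-2},D^n)$ — and an Eckmann--Hilton argument shows these induce the same (group) operation on $\pi_0$. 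Hence $(\pi_0\Emb(D^{n-1},S^1\times D^{n-1}),\text{stacking})$ is a group, isomorphic to $\pi_1\Emb^\nu(D^{n-2},D^n)$.

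The conclusion is then formal. The map $r_*$ is a surjective homomorphism of monoids whose source is a group, and the image of a group under such a homomorphism is a group: if $x=r_*(y)$ then $r_*(y^{-1})$ is a two-sided inverse of $x$, since $r_*(y)r_*(y^{-1})=r_*(yy^{-1})=r_*(1)=1$ and similarly on the other side. This gives the group structure on $\pi_0\Emb(D^{n-1},D^n)\cong\pi_0\Emb(S^{n-1},S^n)$, and composing $r_*$ with the isomorphism $\pi_1\Emb^\nu(D^{n-2},D^n)\xrightarrow{\ \sim\ }\pi_0\Emb(D^{n-1},S^1\times D^{n-1})$ produces the asserted onto homomorphism $\pi_1\Emb^\nu(D^{n-2},D^n)\to\pi_0\Emb(D^{n-1},D^n)\cong\pi_0\Emb(S^{n-1},S^n)$. (The case $n=2$ is subsumed, and can be checked directly: $\Emb(D^1,D^2)$ is connected by Schoenflies, so the monoid there is trivial.)

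The main obstacle is the geometric bookkeeping concentrated in the second paragraph: verifying that the scanning equivalence of Theorem \ref{main1} intertwines stacking on $\Emb(D^{n-1},S^1\times D^{n-1})$ with a multiplication belonging to the $2$-cubes action on $\Omega\Emb^\nu(D^{n-2},D^n)$, and likewise that the universal-cover retraction $r$ respects stacking up to homotopy. Once these compatibilities are in place, the homotopy theory (a loop space has $\pi_0$ a group; Eckmann--Hilton) and the final monoid-theoretic step are routine. This geometric input is essentially supplied by the relevant part of \cite{BG}, of which the present argument is a repackaging.
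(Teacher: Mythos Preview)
Your proposal is correct and follows essentially the same route as the paper: reduce to $\pi_0\Emb(D^{n-1},D^n)$ via linearisation, use the stacking-compatible inclusion $\Emb(D^{n-1},D^n)\to\Emb(D^{n-1},S^1\times D^{n-1})$ together with the universal-cover retraction from Proposition~\ref{cerf1}, identify the middle space with $\Omega\Emb^{\nu}(D^{n-2},D^n)$ via Theorem~\ref{main1}, and invoke the Eckmann--Hilton/$2$-cubes argument to see the stacking monoid there is a group. The paper leaves the final monoid-theoretic step (image of a group under a surjective monoid map is a group) implicit, whereas you spell it out, but the substance is the same.
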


When $n=1$, the set $\pi_0 \Emb(S^0, S^1)$ is also known to be a group, as it has only a single-element. The group
$\pi_1 \Emb^{\nu}(D^{n-2}, D^n)$ is known to be non-trivial when $n=4$ \cite{BG} although all presently-known elements map
to zero in $\pi_0 \Emb(D^{n-1}, D^n)$.  

The homomorphism $\pi_1 \Emb^{\nu}(D^{n-2}, D^n) \to \pi_0 \Emb(D^{n-1}, D^n)$ has this description.  Take a linearly-embedded
copy of $\HD^{n-1}$ in $D^n$, i.e. the half-disc in $D^{n-1} \times \{0\} \subset D^n$. Given a loop of embeddings of
$D^{n-2}$ (with normal vector field) in $D^n$, lift that path of embeddings to a path in $\Emb(\HD^{n-1}, D^n)$ that begins at 
the linear embedding.  At the end of this path, we have 
a smooth embedding $\HD^{n-1} \to D^n$ which agrees with our standard
inclusion on the boundary, including its normal derivative.  Via a small isotopy, we can ensure this embedding $\HD^{n-1} \to D^n$
 agrees with the standard inclusion in a neighborhood of
the boundary.  Drill the flat face of the embedded $\HD^{n-1}$ from $D^n$, this results in a copy of $S^1 \times D^{n-1}$
together with a smoothly-embedded $D^{n-1} \to S^1 \times D^{n-1}$ which agrees with the standard inclusion 
$\{1\} \times D^{n-1} \subset S^1 \times D^{n-1}$
on the boundary. Lift this embedding to the universal cover of $S^1 \times D^{n-1}$ and identify the universal cover 
with a subspace of $D^n$ ($D^n$ with two boundary points removed). This embedding $D^{n-1} \to D^n$ is the value of our map
$\pi_1 \Emb^{\nu}(D^{n-2}, D^n) \to \pi_0 \Emb(D^{n-1}, D^n)$. 

There is a Kervaire-Milnor style argument that the monoid $\pi_0 \Emb(S^{n-1}, S^n)$ has inverses.  Given an
embedding $f : S^{n-1} \to S^n$ drill a small open ball from $S^{n-1}$ and consider a tubular neighborhood of this manifold. 
It is diffeomorphic to $D^{n-1} \times I$, and so the boundary of this manifold is diffeomorphic to the connect-sum of $f(S^{n-1})$ 
with its mirror-reverse. Since the embedding bounds a copy of $D^{n-1} \times I \simeq D^n$ (after rounding corners), we have
that $f(S^{n-1}) \# f(-S^{n-1})$ is standard, thus $f$ and its mirror-reverse are inverses of each other.  The relative advantage
of Theorem \ref{monoidstruc} is that it provides a group $\pi_1 \Emb^\nu(D^{n-2}, D^n)$ that maps onto the
Sch\"onflies monoid $\pi_0 \Emb(S^{n-1}, S^n)$, i.e. it gives us a prescription for how one can construct all Sch\"onflies 
spheres. 

The resolution of the Sch\"onflies problem in dimension different from four gives another argument that the monoid of
Sch\"onflies spheres $\pi_0 \Emb(S^{n-1}, S^n)$ is a group, when $n \neq 4$, as this tells us the reparametrizations of the linear
embedding gives an onto homomorphism $\pi_0 \Diff(S^{n-1}) \to \pi_0 \Emb(S^{n-1}, S^n)$.  
The triviality of $\pi_0 \Emb(S^{n-1}, S^n)$ when $n=4$ is equivalent to the Sch\"onflies Problem, as $\Diff(D^3)$ is 
contractible \cite{Hat}. 

\providecommand{\bysame}{\leavevmode\hbox to3em{\hrulefill}\thinspace}

\end{document}